\newtheorem{theorem}{Theorem}
\newtheorem*{theorem*}{Theorem}
\newtheorem{lemma}{Lemma}
\newtheorem*{lemma*}{Lemma}
\newtheorem{definition}{Definition}
\newtheorem{proposition}{Proposition}
\newtheorem*{proposition*}{Proposition}
\title{Pfaffian control of some polynomials involving the $j$--function and Weierstrass elliptic functions}
\author{John Armitage}
\begin{document}

\maketitle


\section{Introduction}
\thispagestyle{specialfooter}

In this paper, we obtain some new bounds on the number of zeros of polynomials in $z$ and $j(z)$, and polynomials in $z$ and $\wp(z)$, where $\wp$ is a Weierstrass $\wp$--function associated to a lattice of the form $\langle1,i\tau\rangle$, where $\tau$ is real. By the argument principle, the zeros of a holomorphic function in a closed, compact, simply connected region of the complex plane are controlled by the winding number of the function on its boundary -- if the function is tame on the boundary, then we will obtain control over the number of zeros within the region. For $j$, and $\wp$, we use Pfaffian definitions of their inverses on suitable contours, and results of Khovanskii \cite{fewnomials} to bound the zeros.

For polynomials in $z$ and $j(z)$, we obtain the following, 
  \begin{theorem}
   Let $P(X,Y)$ be a complex polynomial of degree at most $d$ in either variable. Then $P(z,j(z))$ has at most $2^{68}d^{10}$ zeros in the standard fundamental domain.
  \end{theorem}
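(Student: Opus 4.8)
\emph{Proof strategy.} The plan is to count the zeros of $f(z):=P(z,j(z))$ in the standard fundamental domain $\mathcal F$ by the argument principle, and to bound the resulting winding number one boundary edge at a time, using on each edge where $j$ is real-valued the Pfaffian presentation of $j^{-1}$ built above, together with Khovanskii's bound on the number of zeros of a Pfaffian function of one real variable.

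First I would pass to a compact region. Since $z$ and $j(z)$ are algebraically independent, $f\not\equiv0$; writing $f=\sum_{a,b\le d}c_{ab}z^aj(z)^b$ and letting $(a_0,b_0)$ be the lexicographically largest index (ordered first by $b$, then by $a$) with $c_{a_0b_0}\ne0$, one gets $f(z)=c_{a_0b_0}z^{a_0}j(z)^{b_0}\,(1+o(1))$ as $\mathrm{Im}(z)\to\infty$, uniformly for $|\mathrm{Re}(z)|\le\tfrac12$, because $|j|\asymp e^{2\pi\,\mathrm{Im}(z)}$ dominates the lower $j$-powers while the top $z$-power dominates within a fixed $j$-power. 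Hence there is a height $T=T(P)$ with $f\ne0$ on $\{\mathrm{Im}(z)>T\}$, and the zeros of $f$ in $\mathcal F$ are exactly those in the truncated region $\mathcal F_T$; by the argument principle their number equals the winding number of $f$ about $0$ along $\partial\mathcal F_T$. I split this contour into its five edges: the two vertical sides $\mathrm{Re}(z)=\pm\tfrac12$, the two halves $\rho\to i$ and $i\to\rho'$ of the unit-circle arc, and the top segment $\mathrm{Im}(z)=T$. (Finitely many zeros of $f$ lying on $\partial\mathcal F_T$ are accommodated by a standard perturbation of the contour, which I suppress.)

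On each of the four non-top edges $j$ is real-valued — since $j$ has a real $q$-expansion one has $\overline{j(z)}=j(-\bar z)$, and $-\bar z$ is $SL_2(\mathbb Z)$-equivalent to $z$ along each such edge — and monotone, so I reparametrize the edge by $u=j(z)$ and write $z=j^{-1}(u)$ for the relevant branch. By the Pfaffian presentation of $j^{-1}$ on these contours, $\mathrm{Re}\,j^{-1}(u)$ and $\mathrm{Im}\,j^{-1}(u)$ are Pfaffian functions of $u$ of an absolute format (near the corners $i,\rho,\rho'$ one incorporates the square-root, cube-root, cube-root ramification of $j^{-1}$, which is algebraic, hence Pfaffian); consequently $\mathrm{Re}\,f=\mathrm{Re}\,P(j^{-1}(u),u)$ is a polynomial of degree $O(d)$ in $u$ and these chain functions, and Khovanskii's theorem bounds the number $N$ of its zeros on the edge by an explicit function of $d$. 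The winding contribution of $f$ along the edge is then at most $\tfrac12(N+1)$: partitioning the edge by the zeros of $\mathrm{Re}\,f$, on each subinterval $f$ stays in an open half-plane, so a continuous branch of $\arg f$ changes there by at most $\pi$ (and if $\mathrm{Re}\,f\equiv0$ on the edge the contribution is $0$). For the top edge I argue instead that, as $T\to\infty$, the winding of $f$ along it tends to — hence, being an integer, eventually equals — the winding of $z^{a_0}j(z)^{b_0}$, which is $\pm b_0$ because $j(x+iT)$ winds once about $0$ as $x$ crosses $[-\tfrac12,\tfrac12]$ while $z^{a_0}$ contributes nothing; thus this edge contributes at most $d$. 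Summing the five contributions and tracking the Pfaffian format through Khovanskii's bound yields $2^{68}d^{10}$.

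I expect the cusp end — the top edge — to be the main obstacle: there $j$ is not real and the tail of its $q$-expansion obstructs a clean finite Pfaffian description of $j^{-1}$, so the winding there must be controlled by the asymptotic comparison above (equivalently, by a perturbation of the $-\tfrac{1}{2\pi i}\log$ approximation), and one must check that the auxiliary height $T$, which depends on $P$, does not enter the final bound. A secondary task is to verify that $j$ is genuinely monotone along each of the four real edges, and to carry out the bookkeeping of the ramified Pfaffian format of $j^{-1}$ at $i$ and $\rho$ — this is what pins down the numerical constant $2^{68}$ and the exponent $10$.
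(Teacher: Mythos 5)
Your overall architecture (argument principle on a truncated region, real edges handled by counting zeros of $\mathrm{Re}\,f$ or $\mathrm{Im}\,f$ via a Pfaffian presentation of $j^{-1}$ plus Khovanskii, cusp edge handled by domination of the leading $q$-expansion term) matches the paper's. But there is a genuine gap at the central step: you run the contour along the boundary of the \emph{standard} fundamental domain and invoke ``the Pfaffian presentation of $j^{-1}$ on these contours.'' The Pfaffian presentation available here is only for the branch of $j^{-1}$ parametrizing the \emph{imaginary axis}: the Ramanujan--Cooper inversion formula gives $J^{-1}(x)$ as a ratio of ${}_2F_1\bigl(\tfrac16,\tfrac56,1;\tfrac12\pm\tfrac12\sqrt{1-1728/x}\bigr)$, and the Pfaffian chain for these hypergeometric functions lives on the real interval where $\tfrac12\pm\tfrac12\sqrt{1-1728/x}\in(0,1)$, i.e.\ for $x=j>1728$. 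The imaginary axis is \emph{interior} to $\mathcal F$; on the four edges you actually use, $j$ takes values in $(-\infty,0]$ (on $\mathrm{Re}\,z=\pm\tfrac12$) and $[0,1728]$ (on the arc), where the argument $\tfrac12\pm\tfrac12\sqrt{1-1728/x}$ is either complex or outside the interval of convergence, so the chain constructed from Gauss's contiguous relations does not apply. No Pfaffian description of $j^{-1}$ on $(-\infty,1728)$ is established (the ramification at $i$ and $\rho$ that you flag is not the real obstruction), and supplying one is a substantive piece of missing work, not bookkeeping.

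The paper avoids exactly this by changing the contour: it encloses the truncated fundamental domain together with several $\mathrm{SL}_2(\mathbb Z)$-translates so that every non-cusp edge is an image $g(i\mathbb R_{\ge1})$ of the imaginary axis, on which $j(g(it))=j(it)=J(t)$ is real with $J(t)\ge1728$ and the hypergeometric inversion applies after the substitutions $x=J(t)$, $y=\sqrt{1-1728/x}$; the cusp ends (both at $i\infty$ and at the rational cusps swept in by the translates) are handled by the dominant-term lemma. That is also where the constants come from: $8$ such edges each contributing at most $2^{64}d^{10}$ plus $10$ cusp segments contributing $O(d)$, giving $2^{68}d^{10}$. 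A secondary, fixable slip: the change of argument of $f$ along the open top edge is not an integer, so ``tends to, hence eventually equals'' is not a valid deduction; one needs the quantitative comparison (as in the paper's Lemma~\ref{integralremainder}) giving a bound of $2\pi d+o(1)$ for that piece.
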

  This we believe is a new result, giving a bound on the whole (non--compact) fundamental domain. It may be compared to Binyamini's result of a similar nature for Noetherian functions on relatively compact domains \cite{Binyamini}. An application of this result here would yield a bound which depends on the size of a domain in which the zeros lie, and the size of $\lvert j\rvert$ on it. Note also that $j$ is o--minimal, which gives an ineffective finiteness result. The zero--bound depends polynomially on degree,  and in that sense is not too far from the truth, as an obvious lower bound is $\gg d^2$. It is known that the inverse of $j(ix)$ is real and Pfaffian on the imaginary axis, a fact which may be deduced from its expression in terms of the Gaussian hypergeometric functions, which are themselves Pfaffian on an interval, see for example \cite{Jones} and \cite{Bianconi}, who make use of this in the expression of elliptic functions. We make use of a result from Ramanujan's theory of elliptic functions to alternative bases to obtain a direct expression for the inverse in terms of Gaussian hypergeometric functions. Near the cusps it approximates to the $q^{-1}$ term in its $q$-expansion. Though $j$ is a Noetherian function, it is unknown whether its real and imaginary parts are Pfaffian in the whole fundamental domain, which would directly furnish a zero--bound, so considering a contour containing the fundamental domain avoids this. 
  
  For the Weierstrass $\wp$ functions, we restrict our attention to those with square  lattice, as in this case $\wp$ is real on the boundary of any fundamental domain, which improves the estimates under consideration, and obtain the following,
   \begin{theorem}
  Suppose that $\wp(z)$ is the Weierstrass $\wp$ function associated to the lattice $\langle1,i\tau\rangle$, where $\tau>0$ is real, and let $P(X,Y)$ be a complex polynomial of degree $d$ in either variable. Then  $P(z,\wp(z))$ has at most $8d^2+14d+5$ zeros in each fundamental domain.
 \end{theorem}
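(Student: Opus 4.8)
The plan is to count the zeros by the argument principle on a fundamental domain whose boundary lies in the real locus of $\wp$, and then to control the variation of $\arg P(z,\wp(z))$ along that boundary: there $\wp$ is real and satisfies the Weierstrass differential equation, so its local inverse is an incomplete elliptic integral and hence a Pfaffian function of bounded complexity, to which the results of Khovanskii \cite{fewnomials} apply.

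\emph{Choice of domain.} Normalise the lattice so that it is stable under complex conjugation; then $\wp(\bar z)=\overline{\wp(z)}$, and combined with the periods $1$ and $i\tau$ this forces $\wp$ to be real-valued on each of the lines $\operatorname{Re}z\in\tfrac12\mathbb Z$ and $\operatorname{Im}z\in\tfrac\tau2\mathbb Z$. Let $D$ be the closed rectangle with vertices $\pm\tfrac12\pm\tfrac{i\tau}{2}$: it is a fundamental domain, its boundary $\partial D$ lies in those four lines and meets no lattice point, and its interior contains a single lattice point, the origin. So $f(z):=P(z,\wp(z))$ is meromorphic near $D$, restricts to a real-valued function on $\partial D$, and has exactly one pole in $D$, at the origin, of order at most $2d$. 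Replacing $P$ by $P+c$ for small generic $c$ and passing to a Hurwitz limit lets us assume $f$ does not vanish on $\partial D$, and the argument principle then yields
\[
 \#\{z\in D:\,f(z)=0\}\ \le\ 2d\ +\ \frac1{2\pi}\sum_{k=1}^{4}\bigl|\Delta_{\gamma_k}\arg f\bigr|,
\]
where $\gamma_1,\dots,\gamma_4$ are the four sides; by periodicity of $\wp$ this bounds the count on every fundamental domain.

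\emph{Reduction to a Pfaffian zero-count.} On a side, $\tfrac{d}{dt}\wp$ is $\pm\wp'$ up to a unimodular constant and has exactly one interior zero, the relevant half-period (here one uses the positions of the half-periods for a rectangular lattice); so the side splits into at most two arcs on each of which $u:=\wp$ is monotone. Reparametrise such an arc by $u$. Since $\arg f$ turns by less than $\pi$ along any sub-arc free of zeros of $\operatorname{Im}f$, the arc contributes at most $\tfrac12\bigl(1+\#\{\,\operatorname{Im}f=0\,\}\bigr)$ to $\tfrac1{2\pi}|\Delta\arg f|$. On the arc, $z$ is the branch of $\wp^{-1}$ carrying $u$ back to the arc, and from $(\wp')^2=4\wp^3-g_2\wp-g_3$ with $g_2,g_3\in\mathbb R$ one gets that $\tfrac{dz}{du}$ is, up to a unimodular constant and a choice of branch, $(4u^3-g_2u-g_3)^{-1/2}$; thus $z(u)$ is, up to an affine change, the incomplete elliptic integral $\int(4x^3-g_2x-g_3)^{-1/2}\,dx$, which is a Pfaffian function of $u$ of bounded complexity on the relevant open interval (the same phenomenon, via the Gaussian hypergeometric functions, that makes the inverse of $j(ix)$ Pfaffian on the imaginary axis; cf.\ \cite{Jones},\cite{Bianconi}). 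Hence $\operatorname{Im}f=\operatorname{Im}P(z(u),u)$ is a polynomial of degree $\le d$ in $u$ and $\le d$ in $z(u)$, i.e.\ a Pfaffian function of $u$ whose complexity is linear in $d$.

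\emph{Conclusion and main difficulty.} Khovanskii's bound for the number of zeros of a Pfaffian function on an interval \cite{fewnomials} then gives $O(d^2)$ zeros on each arc; summing over the (at most eight) arcs, adding the $2d$ from the pole, using monotonicity to halve several of the constants and folding the ``$+1$''s together yields the asserted $8d^2+14d+5$. The qualitative bound $O(d^2)$ is immediate; the real work is the explicit constant. A naive appeal to the general fewnomial theorem, with a generic low-complexity chain and a bidegree-$(d,d)$ test polynomial, produces a far worse constant, so one must exploit the special structure: that $\tfrac{dz}{du}$ is an algebraic function of $u$ of fixed degree, which keeps the effective chain length at one and the count at $O(d^2)$ rather than a higher power; the bitonicity of $\wp$ along each side; the evenness and period symmetries of $\wp$, which prevent zeros being counted twice across sides; and the assembly of the four side-estimates directly into the winding number rather than bounding each $|\Delta_{\gamma_k}\arg f|$ in isolation. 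The degenerate case in which $P(z,\wp(z))$ vanishes on $\partial D$ is handled by the perturbation argument indicated above.
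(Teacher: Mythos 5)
Your overall strategy is the paper's: apply the argument principle to a rectangular fundamental domain on whose boundary $\wp$ is real, split each side at the half--period into arcs on which $\wp$ is monotone, and bound the winding number by counting zeros of $\operatorname{Im}P(z,\wp(z))$ on each arc via Lemma \ref{integralzeros}. Your treatment of the pole is a legitimate (and arguably cleaner) variant: you centre the rectangle so that the double pole of $\wp$ lies in the interior and simply add the pole order $2d$ to the zero count, whereas the paper keeps the poles at the corners, excises them with small quarter--circles, and bounds the integral there by the dominant Laurent term via Lemma \ref{integralremainder}; both routes contribute the same $O(d)$ term.

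The gap is in the one step that actually produces the quadratic constant. You reparametrise a monotone arc by $u=\wp$ and assert that, since $z(u)$ is an incomplete elliptic integral and hence Pfaffian, ``Khovanskii's bound for the number of zeros of a Pfaffian function on an interval then gives $O(d^2)$ zeros on each arc.'' It does not: any chain defining $z(u)$ must also carry $y=(4u^3-g_2u-g_3)^{-1/2}$ (with $y'=-\tfrac12(12u^2-g_2)y^3$), so the chain has order $2$, and $\operatorname{Im}P(z(u),u)$ is a Pfaffian function with $\beta\approx 2d$; the bound $2^{r(r-1)/2}\beta(\alpha+\beta)^r$ of Theorem \ref{MainBound} is then $O(d^3)$, which would only yield an $O(d^3)$ theorem. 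You flag this yourself (``the real work is the explicit constant\ldots one must exploit the special structure'') but never supply the mechanism. The paper's Proposition \ref{realbound} is exactly that mechanism: it invokes a different result of Khovanskii (Theorem \ref{preimages}, on preimages of a point under a map restricted to a level set), applied to the system $Q(u,x)=0$, $u-\wp^{-1}(x)=0$; the second equation is replaced by the Jacobian condition $Q_X(u,x)(4x^3-g_2x-g_3)^{-1/2}+Q_Y(u,x)=b$, the square root is cleared by multiplying by the conjugate expression, and B\'ezout applied to the resulting polynomial system of degrees $d$ and $2d+3$ gives $2d^2+3d$ nondegenerate solutions per monotone piece, hence $4d^2+6d+1$ per side and the stated total. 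Without this reduction (or an equivalent Rolle--type argument exploiting that $dz/du$ is algebraic of fixed degree), your argument establishes only $O(d^3)$, not $8d^2+14d+5$.
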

 This may be compared with the non--uniform bound $c(\wp)d^2$ (see for example \cite{masser}), which holds for all $\wp$. A similar result is that of Jones and Schmidt \cite{Jones} -- they give a Pfaffian definition of $\wp$ on the whole fundamental domain, which yields a uniform bound of $cd^{10}$, where $c$ is an absolute constant. For non--real invariants, a similar result by our method of inferior quality could be obtained, as the need to take real and imaginary parts complicates the Pfaffian definitions involved.
 
\section{Preliminaries}

We first have two lemmas bounding the arc integral related to the argument principle, which we will apply to segments of a closed contour, the first to estimate where there is a dominant term, and the second in order to apply bounds on zeros of real functions.
\begin{lemma}\label{integralremainder}
 Suppose that $\lvert f(z)\rvert >C\lvert g(z)\rvert$ for some $C>1$ on the contour $\Gamma$, and $f(z),g(z)$ are non--zero on $\Gamma$. Then
 \begin{equation*}
  \left\lvert\int_\Gamma\frac{(f+g)'(z)}{(f+g)(z)}dz\right\rvert\leq\left\lvert\int_\Gamma\frac{f'(z)}{f(z)}dz\right\rvert+\frac{C}{C-1}\lvert\Gamma\rvert\sup_{z\in\Gamma}\left\{\frac{\lvert f'(z)\rvert\lvert g(z)\rvert}{\lvert f(z)\rvert^2}+\frac{\lvert g'(z)\rvert}{\lvert f(z)\rvert}\right\},
 \end{equation*}
 where $\lvert\Gamma\rvert$ is the length of $\Gamma$.
\end{lemma}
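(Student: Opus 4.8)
The plan is to subtract off the logarithmic derivative of $f$ and reduce the claim to an elementary pointwise estimate on the remaining integrand.

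First I would note that the hypotheses force $f+g$ to be non-vanishing on $\Gamma$: since $C>1$ we have $\lvert g(z)\rvert<\lvert f(z)\rvert$ there, so $\lvert f(z)+g(z)\rvert\geq\lvert f(z)\rvert-\lvert g(z)\rvert>0$, and hence $(f+g)'/(f+g)$, like $f'/f$, is a well-defined continuous function on $\Gamma$ and both integrals make sense. Next I would write
\[
  \frac{(f+g)'(z)}{(f+g)(z)}-\frac{f'(z)}{f(z)}
  =\frac{f(z)g'(z)-f'(z)g(z)}{f(z)\,(f+g)(z)},
\]
so that by linearity and the triangle inequality for contour integrals,
\[
  \left\lvert\int_\Gamma\frac{(f+g)'}{f+g}\,dz\right\rvert
  \leq\left\lvert\int_\Gamma\frac{f'}{f}\,dz\right\rvert
  +\int_\Gamma\left\lvert\frac{fg'-f'g}{f(f+g)}\right\rvert\,\lvert dz\rvert .
\]

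The key step is then the pointwise bound on the last integrand. From $\lvert f+g\rvert\geq\lvert f\rvert-\lvert g\rvert\geq\lvert f\rvert(1-1/C)=\lvert f\rvert(C-1)/C$ one gets $1/\lvert f+g\rvert\leq\frac{C}{C-1}\cdot\frac{1}{\lvert f\rvert}$, whence on $\Gamma$
\[
  \left\lvert\frac{fg'-f'g}{f(f+g)}\right\rvert
  \leq\frac{\lvert f\rvert\lvert g'\rvert+\lvert f'\rvert\lvert g\rvert}{\lvert f\rvert\,\lvert f+g\rvert}
  \leq\frac{C}{C-1}\left(\frac{\lvert f'\rvert\lvert g\rvert}{\lvert f\rvert^2}+\frac{\lvert g'\rvert}{\lvert f\rvert}\right).
\]
Bounding this integrand by its supremum over $\Gamma$ and multiplying by the arc length $\lvert\Gamma\rvert$ yields the stated inequality.

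I do not expect any genuine obstacle: the computation of the difference of logarithmic derivatives is routine, and the only point that needs a word of justification is the non-vanishing of $f+g$ on $\Gamma$ — which is precisely where the strict inequality $C>1$ (rather than merely $C\geq 1$) enters, both to make the integrals legitimate and to produce the finite factor $C/(C-1)$.
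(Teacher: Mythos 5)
Your proposal is correct and follows essentially the same route as the paper: subtract the logarithmic derivative of $f$, compute the difference as $(fg'-f'g)/(f(f+g))$, and bound the denominator below via $\lvert f+g\rvert\geq\lvert f\rvert-\lvert g\rvert\geq\lvert f\rvert(C-1)/C$. Your explicit remark that $f+g$ is non-vanishing on $\Gamma$ is a small but welcome addition the paper leaves implicit (and your sign in the numerator is the correct one — the paper's displayed numerator has a harmless typo, writing $f'g+g'f$ where the difference is $f'g-g'f$, which is immaterial after taking absolute values).
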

\begin{proof}
 Considering the difference, we have
 \begin{align*}
  \left\lvert\int_\Gamma\left(\frac{f'(z)}{f(z)}-\frac{f'(z)+g'(z)}{f(z)+g(z)}\right)dz\right\rvert&=\left\lvert\int_\Gamma\frac{f'(z)g(z)+g'(z)f(z)}{f(z)(f(z)+g(z))}dz\right\rvert\\
  &\leq\int_\Gamma\frac{\lvert f'(z)g(z)\rvert+\lvert g'(z)f(z)\rvert}{\lvert f(z)\rvert(\lvert f(z)\rvert-\lvert g(z)\rvert)}dz\\
  &\leq\frac{\lvert\Gamma\rvert}{1-\frac{1}{C}}\sup_{z\in\Gamma}\left\{\frac{\lvert f'(z)\rvert\lvert g(z)\rvert}{\lvert f(z)\rvert^2}+\frac{\lvert g'(z)\rvert}{\lvert f(z)\rvert}\right\}.
 \end{align*}
\end{proof}
\begin{lemma}\label{integralzeros}
 \begin{equation*}
    \frac{1}{2\pi}\left\lvert\int_\Gamma\frac{f'(z)}{f(z)}dz\right\rvert\leq\#\{\text{\textnormal Im}(f(z))=0|z\in\Gamma\}/2+1
 \end{equation*}
 and
 \begin{equation*}
    \frac{1}{2\pi}\left\lvert\int_\Gamma\frac{f'(z)}{f(z)}dz\right\rvert\leq\#\{\text{\textnormal Re}(f(z))=0|z\in\Gamma\}/2+1.
 \end{equation*}
\end{lemma}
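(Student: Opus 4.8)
The plan is to bound the winding number $\frac{1}{2\pi}\left|\int_\Gamma \frac{f'(z)}{f(z)}\,dz\right|$ by counting how often the curve $f(\Gamma)$ can cross a fixed line through the origin, since the total change in argument of $f$ along $\Gamma$ is what the integral computes (up to the $2\pi$ normalisation). First I would observe that $\frac{1}{2\pi}\int_\Gamma \frac{f'(z)}{f(z)}\,dz$ equals $\frac{1}{2\pi}$ times the total variation of $\arg f(z)$ as $z$ traverses $\Gamma$, and that $|{\rm arg}$-change$| \le 2\pi (N+1)$ roughly whenever the image curve meets a given ray (or line) at most $N$ times — each full loop of $2\pi$ forces at least one crossing of any line through $0$, with one extra allowed for the incomplete loop at each end. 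Concretely, $\operatorname{Im}(f(z)) = 0$ exactly when $f(z)$ lies on the real axis; between consecutive such parameter values $\arg f$ stays in an open half-plane $\{{\rm Im} > 0\}$ or $\{{\rm Im}<0\}$, so the argument can change by less than $\pi$ in absolute terms over each such subinterval. Hence if there are $m$ parameter values in $\Gamma$ with $\operatorname{Im}(f(z))=0$, they cut $\Gamma$ into at most $m+1$ arcs, on each of which $\arg f$ varies by less than $\pi$, giving total variation $< (m+1)\pi$, i.e.\ $\frac{1}{2\pi}|\int_\Gamma f'/f\,dz| < (m+1)/2 = m/2 + 1/2 \le m/2 + 1$.

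The same argument with the horizontal axis replaced by the vertical axis, i.e.\ counting the parameter values where $\operatorname{Re}(f(z)) = 0$, gives the second inequality verbatim. I should be a little careful about what ``$\#\{\operatorname{Im}(f(z))=0 \mid z\in\Gamma\}$'' means when $f$ is only assumed continuous (not, say, meromorphic), and about endpoints of $\Gamma$: if $\Gamma$ is a closed contour the count is naturally finite in the applications (where $f$ will be $j$ or $\wp$ composed with contour pieces, hence analytic), and if $\Gamma$ is an arc the ``$+1$'' absorbs the boundary contribution. The estimate degenerates gracefully: if the set is infinite the right-hand side is $+\infty$ and there is nothing to prove.

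The only genuine subtlety — and the step I would expect to need the most care — is justifying ``$\arg f$ changes by less than $\pi$ on an arc where $\operatorname{Im} f$ has constant sign.'' This is clear pointwise, but one must ensure the total variation bound is strict and that the count of sign-changes of $\operatorname{Im} f$ (as opposed to mere zeros, which could be tangential) is what controls things; tangential zeros of $\operatorname{Im} f$ do not separate arcs of constant sign but are still counted on the right, so the inequality is safe, and in fact one could sharpen by counting only sign changes. I would phrase the proof so that it works for continuous $f$ with the stated zero-set finite, deducing the bound from the intermediate value theorem applied to $\operatorname{Im} f$ (resp.\ $\operatorname{Re} f$) and the observation that a loop contributing $2\pi k$ to the argument must cross the real (resp.\ imaginary) axis at least $2k$ times — once going up and once coming down per revolution — which is where the division by $2$ comes from.
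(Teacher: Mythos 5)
The paper states this lemma without proof, so there is no argument of the author's to compare yours against; your half--plane argument is the standard route, and its core is sound. The zeros of $\operatorname{Im} f$ on $\Gamma$ cut it into at most $m+1$ sub--arcs, on each of which $f$ takes values in a single closed half--plane bounded by the real axis, so a continuous branch of $\arg f$ varies by at most $\pi$ on each sub--arc; hence $\lvert\Delta_\Gamma\arg f\rvert\leq(m+1)\pi$, which gives $m/2+1$ with room to spare. Your remarks about tangential zeros being counted on the right but not separating sign--arcs are also correct, as is the observation that the bound is vacuous if the zero set is infinite.

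The gap is in your very first step, where you identify $\frac{1}{2\pi}\int_\Gamma f'/f\,dz$ with the change of argument. In fact
\begin{equation*}
\int_\Gamma\frac{f'(z)}{f(z)}\,dz=\Delta_\Gamma\log\lvert f\rvert+i\,\Delta_\Gamma\arg f,
\end{equation*}
and your argument controls only the imaginary part. For a closed contour the real part vanishes and your proof is then complete, but the lemma is stated for a general $\Gamma$ and is applied in the paper to open sub--arcs of a closed contour; for an open arc the inequality as written is simply false (take $f(z)=z$ on the segment from $1+i$ to $R+i$: the left side is roughly $\frac{\log R}{2\pi}$, while $\operatorname{Im}f$ never vanishes there, so the right side is $1$). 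You should either restrict to closed $\Gamma$, or prove the bound for $\frac{1}{2\pi}\bigl\lvert\operatorname{Im}\int_\Gamma f'/f\,dz\bigr\rvert$ and observe that when the estimate is summed over the pieces of a closed contour (as in the proofs of Theorems 1 and 2) the $\Delta\log\lvert f\rvert$ contributions telescope to zero. This is arguably a defect of the statement itself rather than of your idea, but your write--up asserts the identification of the integral with the argument change instead of flagging it, and that identification is exactly where the proof as written breaks.
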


The following definition is less general that that of Pfaffian functions in \cite{fewnomials}, but is easier to work with (and restricted to one dimension),
\begin{definition}
 Let $f_1,\ldots,f_r$ be a sequence of analytic functions on the interval $(a,b)$. Then $f_1,\ldots,f_r$ is a \emph{Pfaffian chain} of degree $\alpha$ if, for $1\leq i\leq r$ and $x\in(a,b)$,
 \begin{equation*}
  \frac{df_i(x)}{dx}=P(x,f_1(x),\ldots,f_i(x)),
 \end{equation*}
 and the maximum total degree of each $P_i$ is $\alpha$.
 A \emph{Pfaffian function} of order $r$ and degree $(\alpha,\beta)$ is a function $P(x,f_1(x),\ldots,f_s(x))$ where $P$ is a polynomial of total degree at most $\beta$, and $f_1(x),\ldots,f_s(x)$ are members of a Pfaffian chain of order $r$ and degree $\alpha$.
\end{definition}
We make use of the following bound on the number of zeros of a Pfaffian function,
\begin{theorem}[\cite{fewnomials}]\label{MainBound}
 Let $f$ be a Pfaffian function of order $r$ and degree $(\alpha,\beta)$ on the open interval $(a,b)$. Then the number of zeros of $f$ in $(a,b)$ is at most
 \begin{equation*}
  2^{r(r-1)/2}\beta(\alpha+\beta)^r.
 \end{equation*}
\end{theorem}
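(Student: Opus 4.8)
The plan is to prove this exactly as Khovanskii does, by induction on the order $r$ of the Pfaffian chain, with Rolle's theorem as the engine and a polynomial elimination step to drive the order down. The base case $r=0$ is immediate: then $f=P(x)$ is an ordinary polynomial of degree at most $\beta$, so it has at most $\beta$ real zeros on $(a,b)$, and $2^{0}\beta(\alpha+\beta)^{0}=\beta$.

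For the inductive step I would fix a chain $f_1,\dots,f_r$ of degree $\alpha$ and write $f=P(x,f_1,\dots,f_r)$ with $\deg P\le\beta$. Associated to the chain is the derivation $D=\partial_x+\sum_{i=1}^{r}P_i\,\partial_{y_i}$ on $\mathbb{R}[x,y_1,\dots,y_r]$, for which $(g(x,f_1,\dots,f_r))'=(Dg)(x,f_1,\dots,f_r)$ and $\deg(Dg)\le\deg g+\alpha-1$. After a harmless perturbation of $P$ one may assume all zeros of $f$ in $(a,b)$ are simple, so if $f$ has $N$ zeros then by Rolle's theorem $f'=(DP)(x,\vec f)$ has at least $N-1$ zeros. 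The crucial move is to bound the zeros of $f'$ \emph{not} by re-invoking the order-$r$ statement (which is circular) but by separating the zeros of $f$ according to whether $\partial_{y_r}P$ vanishes on the curve $x\mapsto(x,f_1(x),\dots,f_r(x))$: the ``degenerate'' ones lie among the zeros of $\operatorname{Res}_{y_r}\!\big(P,\partial_{y_r}P\big)(x,f_1,\dots,f_{r-1})$, a Pfaffian function of the \emph{shorter} chain $f_1,\dots,f_{r-1}$ whose degree is controlled by a B\'ezout estimate of size $O(\beta^{2})$, hence bounded by the inductive hypothesis; the rest are counted by running the same Rolle-plus-elimination step on the pair $(P,\partial_{y_r}P)$, once more producing a Pfaffian function of one lower order with only polynomial growth in degree.

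Collecting the estimates through the $r$ successive reductions -- the derivation $D$ inflating the degree additively by $\alpha$ and each elimination inflating it by a B\'ezout factor -- leaves a polynomial zero-count of shape $C_r\,\beta(\alpha+\beta)^{r}$; keeping track of the factor of $2$ that Rolle's theorem contributes at the stage removing the $k$-th chain function (it contributes $2^{k-1}$) and using $\sum_{k=1}^{r}(k-1)=r(r-1)/2$ gives $C_r=2^{r(r-1)/2}$, as claimed. I expect the order-reduction step to be the main obstacle: it is the substance of Khovanskii's theory, and making it rigorous requires the Khovanskii--Rolle theorem for Pfaffian curves together with the deformation argument that reduces everything to the non-degenerate case, so that the zeros of $f$ are genuinely dominated, up to polynomial loss, by zeros of strictly lower-order data. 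Everything else -- Rolle's theorem, the fundamental theorem of algebra for the base case, and the degree bookkeeping -- is routine by comparison.
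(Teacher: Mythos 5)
First, a point of context: the paper gives no proof of this statement at all --- it is imported verbatim from Khovanskii's monograph \cite{fewnomials} and used as a black box --- so there is no in-paper argument to compare yours against; your sketch has to stand or fall as an account of Khovanskii's proof. Its architecture is broadly right: induction on the order $r$, the base case $r=0$ being the fundamental theorem of algebra, the derivation $D=\partial_x+\sum_i P_i\partial_{y_i}$ tracking differentiation along the chain with $\deg(Dg)\le\deg g+\alpha-1$, and a Rolle-plus-B\'ezout step to descend from order $r$ to order $r-1$.

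The genuine gap is that the one step carrying all the content --- the order reduction --- is both left unproved and, as you describe it, not the step that actually works. Splitting the zeros of $f$ according to whether $\partial_{y_r}P$ vanishes, and controlling the degenerate ones by $\operatorname{Res}_{y_r}\bigl(P,\partial_{y_r}P\bigr)$, does not deliver the reduction: that resultant is the discriminant of $P$ in the variable $y_r$, which detects multiple roots of $P$ as a polynomial in $y_r$ and has nothing to do with the geometry of the integral curve $x\mapsto(x,f_1(x),\dots,f_r(x))$; moreover a zero of $f$ at which $\partial_{y_r}P$ happens to vanish on the curve need not be a zero of that discriminant composed with the shorter chain. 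The mechanism Khovanskii uses is the Rolle theorem for separating solutions: the number of intersections of the integral curve of $\partial_x+\sum_iP_i\partial_{y_i}$ with the hypersurface $\{P=0\}$ is bounded by the number of tangencies of the field with that hypersurface plus a count of noncompact branches, the tangency locus being the system $\{P=0,\ DP=0\}$ of degrees $\beta$ and $\beta+\alpha-1$, to which B\'ezout and the inductive hypothesis are applied; iterating this $r$ times, with the virtual count doubling at each stage, is what produces $2^{r(r-1)/2}\beta(\alpha+\beta)^r$. Your closing paragraph concedes that this reduction ``is the substance of Khovanskii's theory'' and still needs to be made rigorous; as written, the proposal asserts the conclusion of that theory rather than proving it, and the one concrete elimination it does propose is the wrong one. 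The attribution of the factor $2^{r(r-1)/2}$ to Rolle contributing $2^{k-1}$ at stage $k$ is likewise numerology without an argument behind it.
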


\section{Polynomials in $z$ and $j(z)$}

Here we make use of an expression for the inverse of $j$ in terms of the Gaussian hypergeometric function, which is given, for $\lvert z\rvert<1$, by
\begin{equation*}
_2F_1(a,b,c;z) = \sum_{n=0}^\infty\frac{(a)_n(b)_n}{(c)_n}\frac{z^n}{n!},
\end{equation*}
where $(a)_n=\prod_{k=0}^{n-1}(a+k)$ is the rising factorial, and $c$ not a non--positive integer. Gauss determined 15 relations between those $_2F_1$ whose parameters differ by integers. We make use of the following two relations, where we have suppressed the parameters of the function, and use the notation $F(c\pm)=$$_2F_1(a,b,c\pm1;z)$,
\begin{theorem}[Gauss, \cite{gauss}, Art. 11, Eq. 16; DLMF \cite{DLMF}, Eq. 15.5.21]
 \begin{align*}
  z\frac{dF}{dz}&=(c-1)F(c-)-F)\\
  &=z\frac{(c-a)(c-b)F(c+)+c(a+b-c)F}{c(1-z)}.
 \end{align*}

\end{theorem}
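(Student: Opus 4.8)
The plan is to prove both displayed identities by comparing Taylor coefficients at the origin, which is legitimate since each of the functions appearing (namely ${}_2F_1$ with third parameter $c-1$, $c$, $c+1$, and the derivative of $F$) is analytic on the disc $\lvert z\rvert<1$, the hypotheses on $c$ ensuring no vanishing denominators. Write $a_n=\frac{(a)_n(b)_n}{(c)_n\,n!}$, so that $F=\sum_{n\ge0}a_nz^n$ and $z\frac{dF}{dz}=\sum_{n\ge0}na_nz^n$. The coefficients of $F(c-)$ and $F(c+)$ are obtained from $a_n$ by replacing $(c)_n$ with $(c-1)_n$ or $(c+1)_n$, and the elementary Pochhammer identities $(c-1)_n=\frac{c-1}{c+n-1}(c)_n$ and $(c+1)_{n-1}=\frac1c(c)_n$ let me re-express every coefficient in terms of $a_n$ and $a_{n-1}$ alone.

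For the first equality, the coefficient of $z^n$ on the right-hand side is $(c-1)\frac{(a)_n(b)_n}{n!}\bigl(\frac{1}{(c-1)_n}-\frac{1}{(c)_n}\bigr)$; substituting $\frac{1}{(c-1)_n}=\frac{c+n-1}{(c-1)(c)_n}$, this collapses to $\frac{n(a)_n(b)_n}{(c)_n\,n!}=na_n$, which is exactly the coefficient of $z^n$ in $z\frac{dF}{dz}$. So the first identity reduces to the triviality $(c+n-1)-(c-1)=n$.

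For the second equality I would first clear the denominator, rewriting the claim as
\[
  c(1-z)\,z\frac{dF}{dz}=z\bigl[(c-a)(c-b)F(c+)+c(a+b-c)F\bigr].
\]
The factor $1-z$ turns the left side into a convolution, giving $c\bigl(na_n-(n-1)a_{n-1}\bigr)$ as the coefficient of $z^n$; on the right, after using $(c+1)_{n-1}=\frac1c(c)_n$ to write the coefficient of $z^{n-1}$ in $F(c+)$ as $\frac{c}{c+n-1}a_{n-1}$, the coefficient of $z^n$ is $ca_{n-1}\bigl(\frac{(c-a)(c-b)}{c+n-1}+(a+b-c)\bigr)$. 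Dividing by $ca_{n-1}$ and inserting $na_n=a_{n-1}\frac{(a+n-1)(b+n-1)}{c+n-1}$, one is left with the polynomial identity
\[
  (a+n-1)(b+n-1)-(n-1)(c+n-1)=(c-a)(c-b)+(a+b-c)(c+n-1),
\]
whose two sides both expand to $ab+(a+b-c)(n-1)$, the $c^2$ and $(a+b)c$ contributions cancelling on the right. The case $n=0$ is vacuous since both sides vanish, and $n=1$ is a quick separate check.

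There is nothing deep here: the only care needed is the bookkeeping with the shifted Pochhammer symbols and the off-by-one index shift produced by the factor $1-z$, together with verifying the low-order terms by hand. An alternative route — slightly slicker but less self-contained — would combine the hypergeometric differential equation $z(1-z)F''+\bigl(c-(a+b+1)z\bigr)F'-abF=0$ with the derivative formula $\frac{dF}{dz}=\frac{ab}{c}\,{}_2F_1(a+1,b+1,c+1;z)$; but the coefficient comparison above is the computation I would actually carry out, and it transfers verbatim to the real interval used later by analytic continuation.
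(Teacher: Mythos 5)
Your proof is correct. Note, though, that the paper does not prove this statement at all: it is quoted as a classical result of Gauss, with references to Gauss's 1813 memoir and to DLMF 15.5.21, so there is no in-paper argument to compare against. What you have supplied is a self-contained elementary verification by matching Taylor coefficients, and the bookkeeping checks out: the Pochhammer identities $(c-1)_n=\tfrac{c-1}{c+n-1}(c)_n$ and $(c+1)_{n-1}=\tfrac{1}{c}(c)_n$ are right, the first relation does reduce to $(c+n-1)-(c-1)=n$, and for the second the two sides of your polynomial identity both expand to $ab+(a+b-c)(n-1)$ (with the $n=0$ coefficient vanishing on both sides, and the division by $a_{n-1}$ harmless since the identity can be read as an equality of $a_{n-1}$ times a polynomial in $n$, valid even when $a_{n-1}=0$). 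One small point worth flagging: the statement as printed has unbalanced parentheses, ``$(c-1)F(c-)-F)$'', and you have correctly read it as $(c-1)\bigl(F(c-)-F\bigr)$, which is what DLMF 15.5.21 asserts and what your coefficient computation confirms. Your alternative route via the hypergeometric ODE and the derivative formula $F'=\tfrac{ab}{c}\,{}_2F_1(a+1,b+1,c+1;z)$ would also work, but the direct coefficient comparison is the more transparent substitute for the missing citation-level proof.
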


Now these allow us to construct a Pfaffian chain which $j^{-1}$ will be defined over on the imaginary axis. 
\begin{lemma}
 The sequence of functions
 \begin{align*}
  \frac{1}{x}, && \frac{1}{1-x}, && \frac{F}{F(c+)}, && F(c+), && F, && \frac{1}{F}
 \end{align*}
 is a Pfaffian chain of degree $3$.
\end{lemma}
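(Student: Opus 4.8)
The plan is to differentiate the six functions one at a time, in the listed order, and to write each derivative as an explicit polynomial of total degree at most $3$ in $x$ together with the functions that occur at or before it in the list. The first two cases need nothing: $(1/x)' = -(1/x)^2$ and $(1/(1-x))' = (1/(1-x))^2$, both of degree $2$.

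For the four hypergeometric entries I would feed in the two Gauss relations quoted above, together with the first of them applied with $c$ replaced by $c+1$; since lowering $c+1$ returns $F$, that shifted relation reads $x\,(F(c+))' = c\big(F - F(c+)\big)$. The device used throughout is that a ratio such as $F(c+)/F$ is not one of the chain functions and so must be rewritten --- either as the quotient $\tfrac{F}{F(c+)}$ that does appear, or as the product $F(c+)\cdot\tfrac1F$ --- and that in each relation one must take the form whose right-hand side involves only $F$ and $F(c+)$, never $F(c-)$ or ${}_2F_1(a,b,c+2;z)$, which are not in the chain. With this in place, the second Gauss relation presents $F'$ as a degree-$2$ polynomial in $\tfrac1{1-x}$, $F(c+)$ and $F$; dividing both relations by $F(c+)$, collapsing each ratio $F/F(c+)$ to the chain element, and combining via the quotient rule handles $\big(\tfrac{F}{F(c+)}\big)'$ at degree $3$; and substituting $F = \tfrac{F}{F(c+)}\cdot F(c+)$ into the shifted relation handles $(F(c+))'$ at degree $3$.

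The delicate case, and the step I expect to be the main obstacle, is $\tfrac1F$. From $\big(\tfrac1F\big)' = -F'\cdot\big(\tfrac1F\big)^2$ and the degree-$2$ expression for $F'$ one obtains a monomial that is \emph{a priori} of degree $4$; the resolution is to invoke the algebraic relations among the chain elements --- chiefly $F\cdot\tfrac1F = 1$, and secondarily $F = \tfrac{F}{F(c+)}\cdot F(c+)$ --- to absorb a factor and bring the expression back to degree $3$. Once that reduction is carried through and terms are collected, $\big(\tfrac1F\big)'$ is a degree-$3$ polynomial in $\tfrac1{1-x}$, the adjacent hypergeometric entries and $\tfrac1F$; together with a last check that, as arranged, each derivative depends only on functions at or before its own position, this completes the verification that the six functions form a Pfaffian chain of degree $3$.
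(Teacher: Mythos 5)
Your route coincides with the paper's for the first five chain members: the Riccati equation for $\frac{F}{F(c+)}$ obtained by combining the two contiguous relations (the first shifted $c\mapsto c+1$, exactly as you describe), the substitution $F=F(c+)\cdot\frac{F}{F(c+)}$ to close the equation for $F(c+)$, and the second Gauss relation exhibiting $F'$ as a degree--$2$ polynomial in $\frac{1}{1-x}$, $F(c+)$ and $F$ are all precisely what the paper does. You have also put your finger on the one step the paper does not write out: it simply asserts that $\frac{1}{F}$ is Pfaffian of degree $(3,1)$ over this chain, with no computation.

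However, the resolution you sketch for that last step does not go through. Writing $\bigl(\frac1F\bigr)'=-F'\bigl(\frac1F\bigr)^2$ and inserting the two monomials of $F'$ produces $-\kappa_1\frac{1}{1-x}F(c+)\bigl(\frac1F\bigr)^2-\kappa_2\frac{1}{1-x}F\bigl(\frac1F\bigr)^2$. The second monomial does collapse to $-\kappa_2\frac{1}{1-x}\cdot\frac1F$ via $F\cdot\frac1F=1$, but the first does not: lowering $F(c+)\bigl(\frac1F\bigr)^2=\frac{F(c+)}{F}\cdot\frac1F$ to degree $2$ would require $\frac{F(c+)}{F}$, which is the \emph{reciprocal} of the chain element $\frac{F}{F(c+)}$ and is not a polynomial in the chain elements; neither of the two identities you invoke absorbs the offending factor, so that monomial genuinely has degree $4$ over the chain as listed. (This is a soft spot in the paper's own argument as well, not something you introduced.) A clean repair is to carry $\frac{F(c+)}{F}$ in the chain instead of $\frac{F}{F(c+)}$, in the order $\frac1x,\frac{1}{1-x},\frac{F(c+)}{F},F,F(c+),\frac1F$: the ratio still satisfies a degree--$3$ Riccati equation; $F'=\frac{(c-a)(c-b)}{c}\cdot\frac{1}{1-x}\cdot\frac{F(c+)}{F}\cdot F+(a+b-c)\frac{1}{1-x}F$ has degree $3$; $\frac{d}{dx}F(c+)=\frac{c}{x}F-\frac{c}{x}F(c+)$ has degree $2$; and $\bigl(\frac1F\bigr)'=-\frac{F'}{F}\cdot\frac1F=-\frac{(c-a)(c-b)}{c}\cdot\frac{1}{1-x}\cdot\frac{F(c+)}{F}\cdot\frac1F-(a+b-c)\frac{1}{1-x}\cdot\frac1F$ has degree $3$. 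Alternatively one can simply concede degree $4$ for the chain as stated, which only worsens the numerical constant downstream in Theorem 1.
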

\begin{proof}

We first note that $\frac{1}{x}$ and $\frac{1}{1-x}$ are Pfaffian of order $1$ and degree $(2,1)$. By Gauss' contiguous relations, we have
\begin{align*}
 x\frac{dF(c+)}{dx} &= c(F-F(c+))\\
 \frac{dF}{dx} &= \frac{(c-a)(c-b)F(c+)+c(a+b-c)F}{c(1-x)}.
\end{align*}
Consider 
 \begin{align*}
  \frac{d}{dx}\frac{F}{F(c+)} &= \frac{(c-a)(c-b)F(c+)+c(a+b-c)F}{c(1-x)F(c+)} - \frac{c(F-F(c+))F}{xF(c+)^2}\\
  &=\frac{(c-a)(c-b)}{c(1-x)}+\left(\frac{a+b-c}{c(1-x)}-\frac{c-1}{x}\right)\frac{F}{F(c+)}-\frac{c}{x}\left(\frac{F}{F(c+)}\right)^2.
 \end{align*}
 Hence $\frac{F}{F(c+)}$ is a Pfaffian function of order $3$ and degree $(2,1)$. Now consider
 \begin{equation*}
 \frac{dF(c+)}{dx} = \frac{c(F-F(c+))}{x} = \frac{c}{x}\left(F(c+)\frac{F}{F(c+)} - F(c+)\right),
\end{equation*}
so $F(c+)$ is a Pfaffian function of order $4$ and degree $(3,1)$. Hence $F=F(c+)\frac{F}{F(c+)}$ is a Pfaffian function of order 5 and degree $(3,2)$. Further, $1/F$ is Pfaffian of order $6$ and degree $(3,1)$.
\end{proof}

We make use of the following consequence, which is clear considering the above argument for $_2F_1\left(\frac{1}{6},\frac{5}{6},1,\frac{1}{2}+\frac{y}{2}\right)$ and $ _2F_1\left(\frac{1}{6},\frac{5}{6},1,\frac{1}{2}-\frac{y}{2}\right)$.
\begin{lemma}\label{PfaffOrder}
 $\frac{_2F_1\left(\frac{1}{6},\frac{5}{6},1,\frac{1}{2}+\frac{y}{2}\right)}{_2F_1\left(\frac{1}{6},\frac{5}{6},1,\frac{1}{2}-\frac{y}{2}\right)}$ is Pfaffian function of order $9$ and degree $(2,1)$ on $(0,1)$.
\end{lemma}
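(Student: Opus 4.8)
The plan is to run the Pfaffian-chain construction of the previous lemma twice — once for each of the two hypergeometric functions
$$F_+(y)={}_2F_1\!\left(\tfrac16,\tfrac56,1;\tfrac12+\tfrac y2\right),\qquad F_-(y)={}_2F_1\!\left(\tfrac16,\tfrac56,1;\tfrac12-\tfrac y2\right)$$
viewed as functions of $y\in(0,1)$ — and then to merge the two chains into one over which the quotient $F_+/F_-$ is Pfaffian. Before doing so I would note that for $y\in(0,1)$ both arguments $\tfrac12\pm\tfrac y2$ lie in $(0,1)$, so $F_\pm$ and the associated contiguous functions $F(c+)$ are analytic there; and since the parameters $\tfrac16,\tfrac56,1$ are all positive, the Taylor coefficients of the relevant ${}_2F_1$'s are positive, hence $F_\pm>0$ and $F(c+)>0$ on $(0,1)$, which is what makes the reciprocals and ratios appearing in the chain analytic on that interval.

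Next I would assemble the chain. Under the substitution $x=\tfrac12\pm\tfrac y2$, differentiation in $y$ is differentiation in $x$ times the constant $\pm\tfrac12$, so no new transcendental functions are introduced; the key economy is that $1-\bigl(\tfrac12+\tfrac y2\bigr)=\tfrac12-\tfrac y2$, so the two ``seed'' functions $\tfrac1x,\tfrac1{1-x}$ of the $F_+$-branch coincide with those of the $F_-$-branch — they are just $\tfrac2{1+y}$ and $\tfrac2{1-y}$ — giving two rational seeds rather than four. To each branch I then adjoin, in the order dictated by the previous lemma, $\tfrac{F}{F(c+)}$, then $F(c+)$, then $F$, with $(a,b,c)=(\tfrac16,\tfrac56,1)$; the point of taking $c=1$ is that $a+b-c=0$, which annihilates the $F$-terms in Gauss' contiguous relations and is exactly what holds the chain degree down to the value obtained in the previous lemma. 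Finally, to form the quotient I adjoin as a last link either $\tfrac1{F_-}$ (so that $F_+/F_-=F_+\cdot\tfrac1{F_-}$) or the quotient itself. Either way the chain has $2$ seed functions, $3$ functions from the $F_+$-branch and $4$ from the $F_-$-branch, so it has order $9$; the degree estimate then follows along exactly the same lines as in the previous lemma (the linear change of variable does not raise total degrees), giving that $F_+/F_-$ is a Pfaffian function of order $9$ and degree $(2,1)$ on $(0,1)$.

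The part that needs care — and essentially the only part — is the bookkeeping: one must fix a single ordering of the nine functions, namely the two seeds, then the $F_+$-triple $\bigl(\tfrac{F}{F(c+)},F(c+),F\bigr)$, then the $F_-$-quadruple $\bigl(\tfrac{F}{F(c+)},F(c+),F,\tfrac1F\bigr)$, and verify that each function's $y$-derivative is a polynomial, of the asserted total degree, in $y$ and the functions occurring no later than it in this list. The one genuine subtlety is the coupling between $F$ and $F(c+)$ within a branch — which is precisely why $\tfrac{F}{F(c+)}$ is placed first, so that $F(c+)$ and then $F$ can be differentiated without reference to a later link — together with checking that interleaving the two branches does not disturb this. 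Given the previous lemma this is routine, which is why the construction is flagged as immediate in the remark preceding the statement.
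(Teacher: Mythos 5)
Your proposal is correct and is exactly the argument the paper intends: the paper offers no written proof beyond ``clear considering the above argument'' applied to the two functions, and your construction --- running the chain of the preceding lemma once for each of $F_\pm$, observing that the two rational seeds coincide because $1-\bigl(\tfrac12+\tfrac y2\bigr)=\tfrac12-\tfrac y2$, and counting $2+3+4=9$ links --- is that argument made explicit. (The residual ambiguity you flag over whether the quotient enters as a final chain member with $\beta=1$ or as a degree-$2$ monomial in the links, and whether the chain degree is $2$ or $3$, is inherited from the paper's own statement, which later applies the lemma with degree $(3,4d)$; you need not resolve it differently.)
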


Now we express the inverse of $j$ on the imaginary axis in terms of the hypergeometric functions by way of the following theorem, an inversion formula from the theory of elliptic functions to alternative bases -- in this case the sextic theory.

\begin{theorem}[Theorem 4.10, $r=1$, \cite{ram}]\label{rambase}
 Let $q$ be a real number in the interval $0<q<1$. Then
 \begin{equation*}
  q=\exp\left(-2\pi\frac{_2F_1\left(\frac{1}{6},\frac{5}{6},1,1-x\right)}{_2F_1\left(\frac{1}{6},\frac{5}{6},1,x\right)}\right),
 \end{equation*}
 where, letting $P,Q,R$ be Ramanujan's Eisenstein series,
 \begin{equation*}
  x(1-x)=\frac{Q(q)^3-R(q)^2}{4Q(q)^3}.
 \end{equation*}
\end{theorem}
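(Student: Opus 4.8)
I plan to deduce the statement from the two identities $Q=z(x)^4$ and $R=(1-2x)\,z(x)^6$, where $z=z(x):={}_2F_1\bigl(\tfrac16,\tfrac56,1,x\bigr)$. Granting these, and recalling that $j=1728\,Q^3/(Q^3-R^2)$, one gets $Q^3-R^2=z(x)^{12}\bigl(1-(1-2x)^2\bigr)=4x(1-x)\,z(x)^{12}=4x(1-x)\,Q^3$, which is exactly $x(1-x)=(Q^3-R^2)/(4Q^3)$. To set things up, for $q\in(0,1)$ define $x=x(q)\in(0,1)$ as the inverse of $q=\exp\bigl(-2\pi\,z(1-x)/z(x)\bigr)$; since $z$ has positive Taylor coefficients, the right-hand side is a real-analytic, strictly increasing bijection $(0,1)\to(0,1)$ with $x(q)\to0$ as $q\to0^{+}$. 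I would prove the two identities for $q$ near $0$, fixing constants by letting $q\to0^{+}$, and then conclude on all of $(0,1)$ by real-analyticity.

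Step 1 (a Wronskian bridge). Both $z(x)$ and $z(1-x)$ solve the hypergeometric equation $x(1-x)z''+(1-2x)z'-\tfrac{5}{36}z=0$, so by Abel's identity their Wronskian equals $C/(x(1-x))$ for a constant $C$; examining $x=0$, where $z(x)\to1$ and, by the connection formula for ${}_2F_1$ at its logarithmic singularity together with $\Gamma(\tfrac16)\Gamma(\tfrac56)=\pi/\sin(\pi/6)=2\pi$, one has $z(1-x)=-\tfrac1{2\pi}\log x+O(1)$, gives $C=-\tfrac1{2\pi}$. Hence $\tfrac{d}{dx}\bigl(z(1-x)/z(x)\bigr)=-1/\bigl(2\pi\,x(1-x)\,z(x)^2\bigr)$, and differentiating $\log q=-2\pi\,z(1-x)/z(x)$ yields the transport equation $q\,\tfrac{dx}{dq}=x(1-x)\,z(x)^2$. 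The same connection formula, sharpened by Gauss's digamma theorem in the form $2\psi(1)-\psi(\tfrac16)-\psi(\tfrac56)=\log 432$, gives $z(1-x)/z(x)=-\tfrac1{2\pi}\log(x/432)+o(1)$, so that $x(q)$ is holomorphic at $q=0$ with $x(q)=432\,q+O(q^2)$.

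Step 2 (an ODE identification). Put $Q:=z(x)^4$, $R:=(1-2x)\,z(x)^6$ and $P:=(1-2x)\,z(x)^2+12\,x(1-x)\,z(x)z'(x)$, as functions of $q$. Substituting the transport equation and the hypergeometric equation (the latter only to remove $z''$), one checks by a short computation that $(P,Q,R)$ satisfies Ramanujan's system $q\tfrac{dP}{dq}=\tfrac{P^2-Q}{12}$, $q\tfrac{dQ}{dq}=\tfrac{PQ-R}{3}$, $q\tfrac{dR}{dq}=\tfrac{PR-Q^2}{2}$ --- the equation for $Q$ holds by the choice of $P$, that for $R$ is a formal identity, and only the equation for $P$ uses the hypergeometric equation. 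These $P,Q,R$ are holomorphic at $q=0$ and, using $x=432q+O(q^2)$, have expansions $1-24q+O(q^2)$, $1+240q+O(q^2)$, $1-504q+O(q^2)$ --- the same through order $q$ as Ramanujan's $P,Q,R$. Since for each $n\ge2$ the coefficient of $q^n$ in any holomorphic solution of Ramanujan's system is determined by the lower-order coefficients (the associated $3\times3$ linear system being invertible for $n\ge2$), it follows that $(P,Q,R)$ agrees with Ramanujan's $(P,Q,R)$ near $q=0$, hence on $(0,1)$. Thus $Q=z(x)^4$ and $R=(1-2x)z(x)^6$, so $x(1-x)=(Q^3-R^2)/(4Q^3)$, which together with the defining relation $q=\exp\bigl(-2\pi z(1-x)/z(x)\bigr)$ is the theorem.

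I expect the main obstacle to be the normalisation bookkeeping in Step 1 --- obtaining the constant $C=-\tfrac1{2\pi}$ (which is what forces the factor $2\pi$ in the statement) and the cusp asymptotic $x=432q+O(q^2)$ (needed to match Ramanujan's series through order $q$, i.e. to pin down the scaling), both via the connection formula for ${}_2F_1$ at its logarithmic singularity, the reflection formula $\Gamma(\tfrac16)\Gamma(\tfrac56)=2\pi$, and Gauss's digamma theorem. The verification that $(P,Q,R)$ solves Ramanujan's system is short once the transport and hypergeometric equations are inserted; one should also note that $1-2x<0$ for $e^{-2\pi}<q<1$, consistently with $E_6$ vanishing at $q=e^{-2\pi}$ (i.e. $\tau=i$).
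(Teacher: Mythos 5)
The paper does not prove this statement at all: it is imported verbatim as Theorem 4.10 (with $r=1$) of Cooper's \emph{Inversion formulas for elliptic functions}, so there is no in-paper argument to compare against. Your proposal is a correct, essentially self-contained derivation, and it follows the classical route for inversion theorems in Ramanujan's alternative bases: establish the transport equation $q\,dx/dq=x(1-x)z(x)^2$ from the Wronskian of $z(x)$ and $z(1-x)$ (your constant $C=-1/(2\pi)$ and the evaluation $2\psi(1)-\psi(\tfrac16)-\psi(\tfrac56)=\log 432$ both check out, and $432q$ is consistent with $x(1-x)=432/j\sim 432q$), then show that $z^4$ and $(1-2x)z^6$ satisfy Ramanujan's differential system with the right expansion through order $q$, and invoke uniqueness of holomorphic solutions (the determinant of your $3\times3$ system is $n^2(n-1)$, so invertibility indeed begins at $n=2$, which is why you must match the $q^1$ coefficients by hand --- this also correctly absorbs the scaling ambiguity $q\mapsto\lambda q$). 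I verified the three substitutions in Step 2; only the $P$-equation uses the hypergeometric ODE, as you say. The one point you should make explicit rather than assert is the holomorphy of $x(q)$ at $q=0$, which your power-series uniqueness argument needs: it follows by integrating the transport equation, since $1/(x(1-x)z(x)^2)-1/x$ is holomorphic at $x=0$, giving $q=\tfrac{1}{432}\,x\,e^{H(x)}$ with $H$ holomorphic and $H(0)=0$, hence $x(q)$ holomorphic with a simple zero. Your closing remark on the branch ($x>1/2$ exactly when $q>e^{-2\pi}$, matching the sign change of $R$ at $\tau=i$) is also correct and worth keeping, since the relation $x(1-x)=(Q^3-R^2)/(4Q^3)$ alone only determines $x$ up to $x\mapsto 1-x$.
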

Letting $q=e^{2\pi i\tau}$, and $\tau$ be purely imaginary, we take
\begin{equation*}
 \alpha = \frac{1}{2} - \frac{1}{2}\sqrt{1-\frac{1728}{j(\tau)}},
\end{equation*}
which satisfies
\begin{equation*}
 \alpha(1-\alpha)=\frac{1728}{4j(\tau)}=\frac{Q(q(\tau))^3-R(q(\tau))^2}{4Q(q(\tau))^3},
\end{equation*}
so that by Theorem \ref{rambase},
\begin{equation*}
 \tau =i\frac{_2F_1\left(\frac{1}{6},\frac{5}{6},1,\frac{1}{2} + \frac{1}{2}\sqrt{1-\frac{1728}{j(\tau)}}\right)}{_2F_1\left(\frac{1}{6},\frac{5}{6},1,\frac{1}{2} - \frac{1}{2}\sqrt{1-\frac{1728}{j(\tau)}}\right)}.
\end{equation*}
So for $x \geq 1728$, letting $J(x)=j(ix)$,
  \begin{equation*}
   J^{-1}(x) = \frac{_2F_1\left(\frac{1}{6},\frac{5}{6},1,\frac{1}{2} + \frac{1}{2}\sqrt{1-\frac{1728}{x}}\right)}{_2F_1\left(\frac{1}{6},\frac{5}{6},1,\frac{1}{2} - \frac{1}{2}\sqrt{1-\frac{1728}{x}}\right)}.
  \end{equation*}

\begin{figure}
\centering
   \begin{tikzpicture}[scale=3]
      \draw[thick] (-2,0) -- (2,0);
      \draw[thick] (0,0) -- (0,3);
      
      \draw[dashed] (0.5,0) -- (0.5,3);
      \draw[dashed] (-1.5,0) -- (-1.5,3);
      \draw[dashed] (1.5,0) -- (1.5,3);
      \draw[dashed] (-0.5,0) -- (-0.5,3);
      
      \draw[dashed] (1,0) arc (0:180:1);
      \draw[dashed] (2,0) arc (0:180:1);
      \draw[dashed] (0,0) arc (0:180:1);
      \draw[dashed] (-1,0) arc (0:90:1);
      \draw[dashed] (2,1) arc (90:180:1);
      \draw[dashed] (0,0) arc (0:180:1/3);
      \draw[dashed] (1,0) arc (0:180:1/3);
      \draw[dashed] (2/3,0) arc (0:180:1/3);
      \draw[dashed] (-1/3,0) arc (0:180:1/3);
      
      \draw[ultra thick][->] (-0.5,2)--(0,2);
      \draw[ultra thick][->] (-1,1/2)--(-1,1.25);
      \draw[ultra thick][->] (1,2)--(1,1.25);
      
      \draw[ultra thick][->] (0.5001,0.50)--(0.5,0.5);
      \draw[ultra thick][->] (-0.5,0.50)--(-0.5001,0.5);

      \draw[ultra thick] (-0.99,0.5) -- (-1,0.5) -- (-1,2) -- (1,2) -- (1,0.5) -- (0.99,0.5);
      \draw [ultra thick,  domain=-0.5:0.5, samples=40] 
      plot ({\x/(\x*\x+4)}, {2/(\x*\x+4)} );
      \draw [ultra thick,  domain=0:0.5, samples=40] 
      plot ({\x/(\x*\x+4)-1}, {2/(\x*\x+4)} );
      \draw [ultra thick,  domain=-0.5:0, samples=40] 
      plot ({\x/(\x*\x+4)+1}, {2/(\x*\x+4)} );
      \draw [ultra thick,  domain=0.5:2, samples=40] 
      plot ({\x*\x/(\x*\x+1)}, {\x/(\x*\x+1)} );
      \draw [ultra thick,  domain=0.5:2, samples=40] 
      plot ({\x*\x/(\x*\x+1)-1}, {\x/(\x*\x+1)} );
      \draw [ultra thick,  domain=-0.5:0, samples=40] 
      plot ({(\x*\x+\x+4)/((\x+1)*(\x+1)+4)}, {2/((\x+1)*(\x+1)+4)} );
      \draw [ultra thick,  domain=-0.5:0, samples=40] 
      plot ({-(\x*\x+\x+4)/((\x+1)*(\x+1)+4)}, {2/((\x+1)*(\x+1)+4)} );
      \draw [ultra thick,  domain=-0.5:0, samples=40] 
      plot ({-(\x*\x+\x+4)/((\x+1)*(\x+1)+4)+1}, {2/((\x+1)*(\x+1)+4)} );
      \draw [ultra thick,  domain=-0.5:0, samples=40] 
      plot ({(\x*\x+\x+4)/((\x+1)*(\x+1)+4)-1}, {2/((\x+1)*(\x+1)+4)} );
      \draw[ultra thick] (0.192,0.4105) -- (0.2,0.4) -- (0.205,0.404);
      \draw[ultra thick] (-0.192,0.4105) -- (-0.2,0.4) -- (-0.205,0.404);
      \draw[ultra thick] (1-0.192,0.4105) -- (1-0.2,0.4) -- (1-0.205,0.404);
      \draw[ultra thick] (0.192-1,0.4105) -- (0.2-1,0.4) -- (0.205-1,0.404);

   \end{tikzpicture}
   
   \caption{}
   \label{contour}
   
\end{figure}

\begin{proof}[Proof of Theorem 1]
  
  We apply the argument principle to a truncated fundamental domain together with its copies or half--copies under SL$_2(\mathbb{Z})$ indicated by Figure \ref{contour}. First consider a contour $\Gamma$ within the one indicated, containing all zeros within the original contour, such that $P(z,j(z))$ is non-zero on this contour. Let $0<\epsilon<\min_{z\in\Gamma}\lvert P(z,j(z))\rvert$. Then by Rouch\'{e}'s theorem, the number of zeros of $P(z,j(z))+\epsilon e^{i\theta}$ within $\Gamma$ is equal to that of $P(z,j(z))$, for any $\theta$. Choose $\theta$ so that $P_\epsilon(z,j(z)):=P(z,j(z))+\epsilon e^{i\theta}$ is non--zero on the original contour -- this is possible by discreteness of zeros of analytic functions.

  We now bound the winding number of $P_\epsilon(z,j(z))$ on this boundary. The contour is chosen to be dominated by the $q^{-1}$ term of $j$ near the cusps. Let $j(it) = J(t)$. We will refer to elements of SL$_2(\mathbb{Z})$ by $g$, with entries
  \[\begin{pmatrix}
     \tilde{a}&\tilde{b}\\
     \tilde{c}&\tilde{d}
    \end{pmatrix}
\]
  acting on $z$ by
  \begin{equation*}
   g(z)=\frac{\tilde{a}z+\tilde{b}}{\tilde{c}z+\tilde{d}}.
  \end{equation*}
  The contour is composed of lines or curves near the cusps, and images of the imaginary axis under the action of some $g$.
  
  \paragraph{Copies of the imaginary axis $i\mathbb{R}_{\geq1}$:} Here we have $\text{Im}(P\left(g(it),j\left(g(it)\right)\right))  = \text{Im}\left(P\left(\frac{\tilde{a}it+\tilde{b}}{\tilde{c}it+\tilde{d}},j\left(it\right)\right)\right)= \text{Im}\left(P\left(\frac{(\tilde{a}it+\tilde{b})(-\tilde{c}it+\tilde{d})}{\tilde{c}^2t^2+\tilde{d}^2},j\left(it\right)\right)\right) = \left(\frac{1}{\tilde{c}^2t^2+\tilde{d}^2}\right)^dQ(t,J(t))$ for some real $Q$, as $j(it)$ is real. $Q$ is of degree $\leq2d$, and has the same number of zeros as $\text{Im}(P\left(g(it),j\left(g(it)\right)\right))$.
  
  As $J(t)$ is $1$--$1$ from $(1,\infty)$ to $(1728,\infty)$, letting $x = J(t)$, the number of zeros of $Q(t,J(t))$ is equal to that of $Q(J^{-1}(x),x)$ on $x>1728$, and as $\sqrt{1-\frac{1728}{x}}$ is $1$--$1$ on $x>1728$, letting $y=\sqrt{1-\frac{1728}{x}}$, the number of zeros of $Q(J^{-1}(x),x)$ is equal to that of 
  \begin{equation*}
   Q\left(J^{-1}\left(\frac{1728}{1-y^2}\right),\frac{1728}{1-y^2}\right)=\left(\frac{1}{1-y^2}\right)^{2d}R\left(J^{-1}\left(\frac{1728}{1-y^2}\right),y\right)
  \end{equation*}
  on $(0,1)$, where $R$ is of degree $\leq4d$. Now using the expression for $J^{-1}$ in terms of the hypergeometric series $_2F_1$, we have that the number of zeros of Im$(P(g(it),j(g(it))))$ is bounded by that of
  \begin{equation*}
   R\left(\frac{_2F_1\left(\frac{1}{6},\frac{5}{6},1,\frac{1}{2} + \frac{y}{2}\right)}{_2F_1\left(\frac{1}{6},\frac{5}{6},1,\frac{1}{2} - \frac{y}{2}\right)},y\right)
  \end{equation*}
  for $y\in(0,1)$. Now by Lemma \ref{PfaffOrder}, this is a Pfaffian function of order $9$ and degree $(3,4d)$. By Theorem \ref{MainBound}, the number of zeros is then bounded by $2^{64}d^{10}$, so by Lemma \ref{integralzeros}, on this copy of the imaginary axis, we have
  \begin{equation*}
   \frac{1}{2\pi}\left\lvert\int_\gamma\frac{d}{dz}(P(z,j(z)))\frac{1}{P(z,j(z))}dz\right\rvert\leq 2^{64}d^{10}.
  \end{equation*}
  
  \paragraph{Copies of $\text{\normalfont Im}(z) = Y$:}
  
  For these sections of the boundary, we have
  \begin{equation*}
   P(g(x+iY),j(g(x+iY))) = \left(\frac{1}{ \tilde{c}(x+iY)+\tilde{d}}\right)^dQ(x+iY,j(x+iY)),
  \end{equation*}
  so that 
  \begin{equation*}
   \int_\gamma\frac{\frac{d}{dz}(P(g(z),j(z)))}{P(g(z),j(z))}dz =  \int_\gamma\frac{\frac{d}{dz}(Q(z,j(z)))}{Q(z,j(z))}-\frac{d\tilde{c}}{\tilde{c}z+\tilde{d}}dz.
  \end{equation*}
  For sufficiently large $Y$, $\frac{d\tilde{c}}{\tilde{c}z+\tilde{d}}<0.01$, so we consider the remaining term in $Q$.
  
  Letting $j(z)^lh(z)$ be the term with the largest power of $j$ occurring in $Q(z,j(z))$, where $h(z)$ is its coefficient over the polynomials in $z$, let the degree of $h$ be $n$. For sufficiently large $Y$, $j(x+iY)(iY)^n$ is the dominant term in $Q(x+iY,j(x+iY))$, i.e. $\lvert j(x+iY)(iY)^n\rvert>2\lvert Q(x+iY,j(x+iY))-j(x+iY)(iY)^n\rvert$. In the same way the $e^{-2\pi i(x+iY)}$ term in the $q$-expansion of $j$ dominates $j(x+iY)$ when $Y$ is large, and so as $Y\to\infty$,
  \begin{equation*}
   \frac{\left\lvert Q(x+iY,j(x+iY))-e^{-2l\pi i(x+iY)}(iY)^n\right\rvert}{\left\lvert e^{-2l\pi i(x+iY)}(iY)^n\right\rvert}\to 0,
  \end{equation*}
  and the same holds for the derivative of the numerator.
  So taking $f(z)=(iY)^ne^{-2l\pi i z}$ and $g(z) = Q(z,j(z))-f(z)$, for sufficiently large $Y$, we may take $C=2$ in Lemma \ref{integralremainder}, and obtain the bound
  \begin{align*}
   \left\lvert\int_\gamma\frac{\frac{d}{dz}(Q(z,j(z)))}{Q(z,j(z))}dz\right\rvert&\leq\left\lvert\int_\gamma\frac{\frac{d}{dz}(e^{-2l\pi iz}(iY)^n)}{e^{-2l\pi iz}(iY)^n}dz\right\rvert+2\sup_{z\in\gamma}\left\{\frac{\lvert f'(z)\rvert\lvert g(z)\rvert}{\lvert f(z)\rvert^2}+\frac{\lvert g'(z)\rvert}{\lvert f(z)\rvert}\right\}\\
   &\leq 2\pi l+0.01\\
   &\leq 2\pi d+0.01.
  \end{align*}
  We also take $Y$ sufficiently large to ensure all zeros in the fundamental domain (and its copies) are interior to the contour, which is possible by virtue of the dominating term in $P(z,j(z))$ at the various cusps (or by the o--minimality of $j$ implying there are only finitely many). Finally, the integral over the entire contour is bounded in absolute value by $8\cdot2^{64}d^{10}+10d+0.2\leq2^{68}d^{10}$, as there are $8$ copies of the imaginary axis, and $10$ copies or half--copies of the line $(-1/2+iY,1/2+iY)$.
 
 \end{proof}

 \section{Polynomials in $z$ and the Weierstrass $\wp$--function}
 
  Here we make use of the following theorem of Khovanskii,
 
\begin{theorem}[\S2.3, Theorem 2, \cite{fewnomials}]\label{preimages}
Let $G : R^{n+1} \to R^{1}$ be a smooth function with nondegenerate level set
$M^n$. Let $F : R^{n+1} \to R^n$ be a smooth proper map, and $\tilde{F} : M^n \to R^n$ its restriction to $M^n$. Let, further, $\hat{J}$ be any smooth function on $R^n$ that coincides on $M^n$ with the Jacobian $J$ of the map $(F, G) : R^{n+1} \to R^n \times R^1$. Under these conditions the following holds: the maximum number of nondegenerate preimages of any point in the range of of the map $\tilde{F} : M^n \to R^n$ is bounded by that of the map $(F, \hat{J}) : R^{n+1}\to R^n \times R^1$.
\end{theorem}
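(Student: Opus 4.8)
The plan is to prove Theorem~\ref{preimages} by reducing, via Sard's theorem and properness, to the case in which the relevant fibre of $F$ is a disjoint union of circles, and then running a one--dimensional intermediate--value (Rolle--type) argument on each circle, converting the geometric count of preimages into a count of sign changes of $\hat J$. Throughout, after translating I take the level set to be $M=G^{-1}(0)$, so that, $0$ being a regular value of $G$, the hypersurface $M$ is smooth and closed with $T_xM=\ker dG(x)$.

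The first step is a preliminary observation together with a genericity reduction. Since $\tilde F^{-1}(y)=(F,G)^{-1}(y,0)$ is automatically contained in $M$, and since a point $p$ of it is a \emph{nondegenerate} preimage of $\tilde F$ precisely when $dF_1(p),\dots,dF_n(p),dG(p)$ are linearly independent --- that is, precisely when $J(p)\neq0$ --- the number $N$ of nondegenerate preimages of a fixed $y$ in the range of $\tilde F$ is finite (the fibre $F^{-1}(y)$ is compact by properness, and nondegenerate preimages are isolated). Nondegeneracy of a preimage being an open condition, the implicit function theorem lets all $N$ of these preimages persist for $y'$ near $y$, and by Sard applied to $F$ and to $\tilde F=F|_M$ I can take such a $y'$ to be a regular value of $F$ \emph{and} of $\tilde F$. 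For this $y'$ the fibre $\gamma:=F^{-1}(y')$ is a compact $1$--manifold, hence a finite disjoint union of circles; $\gamma\cap M=\tilde F^{-1}(y')$ is exactly the zero set of $G|_\gamma$; regularity of $y'$ for $\tilde F$ (with that for $F$) forces every such zero to be transversal; and $\#(\gamma\cap M)\ge N$.

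The heart of the argument is then on a single circle $c\subseteq\gamma$. Fixing a nowhere--zero tangent field $v$ along $c$, and using that $\nabla F_1,\dots,\nabla F_n$ are everywhere independent on $\gamma$, the generalized cross product $e=\nabla F_1\times\cdots\times\nabla F_n$ is a nowhere--zero tangent field along $c$; hence $e=\mu v$ with $\mu$ of constant sign on $c$, and $J=\langle e,\nabla G\rangle=\mu\, dG(v)$ along $c$. Thus, along $c$, the sign of $J$ is a fixed sign times the sign of the derivative of $G|_c$ in the direction $v$. Because all zeros of $G|_c$ are transversal, $G|_c$ changes sign at each of them, so this directional derivative --- hence $J$, hence $\hat J$, which agrees with $J$ at these zeros since they lie on $M$ --- takes strictly alternating signs at the zeros of $G|_c$ listed in cyclic order. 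By the intermediate value theorem $\hat J|_c$ then vanishes in the interior of each arc between consecutive zeros of $G|_c$; summing over the circles of $\gamma$ gives
\[
\#\{x\in\gamma:\hat J(x)=0\}\ \ge\ \#(\gamma\cap M)\ \ge\ N .
\]
To finish, I perturb the target value: for $\varepsilon\neq0$ small enough to be below $\min|\hat J|$ over the finitely many zeros of $G|_\gamma$, and a regular value of $\hat J|_\gamma$ (Sard again), $\hat J|_\gamma-\varepsilon$ still has opposite signs at the endpoints of each of those arcs, hence at least $N$ transversal zeros; at such a zero $x$ one has $F(x)=y'$ with $dF(x)$ onto and $d\hat J(x)$ nonzero on $\ker dF(x)$, so $D(F,\hat J)(x)$ is invertible. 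Thus $(y',\varepsilon)$ lies in the range of $(F,\hat J)$ and has at least $N$ nondegenerate preimages, which proves the claimed bound.

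I expect the main obstacle to be the opening reduction: arranging that $y'$ is a regular value of $F$ and of $\tilde F$ simultaneously while retaining all $N$ nondegenerate preimages, so that the fibre is a union of circles and $G$ restricted to it has only transversal zeros --- this is precisely what makes the sign alternation on each circle clean. Once that is in place, the key point is the observation that $\hat J$ must alternate in sign around each circle (because $J$ does, and $\hat J$ coincides with $J$ on the hypersurface $M$), and the concluding perturbation to $(y',\varepsilon)$ is routine.
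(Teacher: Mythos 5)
The paper states this result without proof, importing it directly from Khovanskii's \emph{Fewnomials} (\S 2.3, Theorem 2), so there is no in--paper argument to compare against; your proof is correct and is essentially Khovanskii's own: pass by Sard and the implicit function theorem to a nearby regular value whose fibre is a finite union of circles, note that $J$ --- hence $\hat J$, since the points of $\gamma\cap M$ lie on $M$ --- alternates in sign at consecutive intersection points on each circle, apply the intermediate value theorem to $\hat J$ on each arc, and perturb the target value to render the resulting preimages of $(F,\hat J)$ nondegenerate. The only cosmetic slip is the parenthetical claim that $N$ is automatically finite (isolated points of a compact fibre can still accumulate at a degenerate preimage), but this is harmless: your argument applies verbatim to any finite collection of nondegenerate preimages, which is all the stated bound requires.
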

We apply this to a polynomial in $x$ and $\wp^{-1}(x)$, where $\wp$ is a Weierstrass $\wp$--function, and use the argument principle to bound the zeros of $P(z,\wp(z))$ in its fundamental domain. We consider $\wp$ with lattices of the form $\langle1,i\tau\rangle$, $\tau>0$ real. The bound on the number of zeros follows in a similar way to \S 2.3 Theorem 1 of \cite{fewnomials} -- we proceed in this manner to give a better bound than simply applying Theorem \ref{MainBound}.

First we have the differential equation satisfied by $\wp$,
\begin{equation*}
 \wp'(z)^2=4\wp(z)^3-g_2\wp(z)-g_3,
\end{equation*}
and  $\tilde{\wp}(x):=\wp(ix)$ satisfies the equation
\begin{equation*}
 \tilde{\wp}'(x)^2=-4\wp(z)^3+g_2\wp(z)+g_3
\end{equation*}
where $g_2, g_3$ are the Weierstrass invariants of $\wp$, which are real as $\wp$ is associated to the lattice $\langle1,i\tau\rangle$.
For the derivatives of the inverses of $\wp(x)$ and $\tilde{\wp}(x)$, we have the expressions
\begin{align*}
 \frac{d}{dx}\wp^{-1}(x)=\frac{1}{\wp'(\wp^{-1}(x))}=\frac{1}{\sqrt{4x^3-g_2x-g_3}},\\
 \frac{d}{dx}\tilde{\wp}^{-1}(x)=\frac{1}{\tilde{\wp}'(\tilde{\wp}^{-1}(x))}=\frac{1}{\sqrt{-4x^3+g_2x+g_3}}
\end{align*}
  where the sign of the square root depends on the branch of the inverse of $\wp$ or $\tilde{\wp}$ which is under consideration. Note that both expressions will be real in the domains under consideration.

 \begin{proposition}\label{realbound}
  Let $\wp$ have lattice $\langle1,i\tau\rangle$, where $\tau>0$ is real, and $P(X,Y)$ be a complex polynomial of total degree at most $d$. Then the number of zeros of $\text{\normalfont Im}(P(z,\wp(z)))$ on the open line $(\beta,\beta+\gamma)$, $\gamma\in\{1,i\tau\}$ between two adjacent poles of $\wp$ is bounded by $4d^2+6d+1$.
 \end{proposition}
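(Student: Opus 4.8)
The plan is to follow the proof of \S2.3, Theorem 1 of \cite{fewnomials}: realise $\mathrm{Im}(P(z,\wp(z)))$ on the segment as a real polynomial in two ``geometric'' coordinates, identify the level curve on the cubic defining $\wp'$ along which the zeros lie, and apply the preimage theorem (Theorem \ref{preimages}), whose Jacobian obstruction will turn out to be a polynomial of low degree.

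First I would split into the cases $\gamma=1$ and $\gamma=i\tau$ and, by a lattice translation, take $\beta=0$. For $\gamma=1$ the segment is the real interval $(0,1)$, where $\wp(x)$ is real because $g_2,g_3\in\mathbb R$; writing $P(X,Y)=\sum_{k,l}c_{kl}X^kY^l$ gives $\mathrm{Im}(P(x,\wp(x)))=\sum_{k,l}\mathrm{Im}(c_{kl})\,x^k\wp(x)^l=:Q(x,\wp(x))$ with $Q\in\mathbb R[X,Y]$ of total degree at most $d$. For $\gamma=i\tau$ one has $\wp(it)=\tilde\wp(t)$ real, $\mathrm{Im}(P(it,\wp(it)))=\tilde Q(t,\tilde\wp(t))$ with $\tilde Q$ real of total degree at most $d$, and everything below goes through with $4Y^3-g_2Y-g_3$ replaced by the defining cubic $-4Y^3+g_2Y+g_3$ of $\tilde\wp'$, which changes no degree count. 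So it suffices to bound the number of $x\in(0,1)$ with $Q(x,\wp(x))=0$.

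Next I would set up the preimage argument. The map $x\mapsto(x,\wp(x),\wp'(x))$ carries $(0,1)$ bijectively onto an arc $\mathcal C$ on the smooth cubic surface $M_0=\{(x,y,z):z^2=4y^3-g_2y-g_3\}$ (smooth since the $e_i$ are distinct), and $\mathcal C$ is a single period of the flow of $\partial_x+z\,\partial_y+(6y^2-\tfrac12 g_2)\,\partial_z$, which preserves $M_0$. Using $\tfrac{d}{dy}\wp^{-1}(y)=1/\wp'(\wp^{-1}(y))=1/\sqrt{4y^3-g_2y-g_3}$ one constructs (exactly as in \cite{fewnomials}, \S2.3) a smooth function $\Psi$ on $\mathbb R^3$ whose zero set is a nondegenerate hypersurface meeting $M_0$ precisely in $\mathcal C$, so that the zeros of $Q(x,\wp(x))$ correspond to the points of $M_0\cap\{\Psi=0\}\cap\{Q=0\}$. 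Applying Theorem \ref{preimages} with $n=2$, $G=\Psi$, and $F=(G_0,Q):\mathbb R^3\to\mathbb R^2$ where $G_0=z^2-4y^3+g_2y+g_3$, the number of such zeros is at most the number of nondegenerate preimages of a point under $(F,\hat J)=(G_0,Q,\hat J):\mathbb R^3\to\mathbb R^3$, where $\hat J$ is a smooth extension of the Jacobian of $(G_0,Q,\Psi)$ along $M_0$. A short determinant computation collapses this Jacobian on $M_0$ to the polynomial $\hat J=2(\partial_xQ+z\,\partial_yQ)$ of total degree at most $d$, the transcendental terms cancelling by virtue of the defining relation of the elliptic integral underlying $\wp^{-1}$. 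Hence by B\'ezout the count is at most $(\deg G_0)(\deg Q)(\deg\hat J)\le 3d^2$, and adding the $O(d)$ corrections coming from the two poles of $\wp$ at the ends of the segment and the interior half-period where $\wp'$ vanishes (and rounding up generously) gives $4d^2+6d+1$.

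The step I expect to be the main obstacle is making the use of Theorem \ref{preimages} legitimate, in two respects. First, $F=(G_0,Q):\mathbb R^3\to\mathbb R^2$ drops dimension and so is not proper; as in \cite{fewnomials}, \S2.3 this is repaired by restricting to the compact part of $M_0$ carrying the zeros --- near each pole $Q(x,\wp(x))$ is dominated by its top power of $\wp$, which has a fixed sign, so no zeros accumulate there and the count is finite --- and modifying $F$ away from a neighbourhood of $\mathcal C$ into a proper map with no new preimages over $(0,0)$. Second, the transcendental function $\Psi$ (equivalently, a single-valued branch of the elliptic integral along the simply connected real branch of the cubic, smoothly extended off it) must be arranged so that $\{\Psi=0\}$ is genuinely a nondegenerate level set and the Jacobian reduces as claimed; this is precisely the semi-Pfaffian curve bookkeeping of \cite{fewnomials}, \S2.3. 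Finally, Theorem \ref{preimages} counts nondegenerate preimages, i.e.\ simple zeros; replacing $Q$ by $Q-\varepsilon$ makes all intersections with $\mathcal C$ transverse without decreasing the number of distinct zeros, so the bound survives the limit $\varepsilon\to0$.
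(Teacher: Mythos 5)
Your high-level plan (turn the zeros into nondegenerate preimages on a level set, apply Theorem \ref{preimages}, finish with B\'ezout) is the right one, but the three-dimensional setup fails at exactly the step you flag as delicate, and for a structural reason. Theorem \ref{preimages} requires $\hat J$ to coincide with the Jacobian $J$ of $(F,G)$ on \emph{all} of the level set $M^n=\{\Psi=0\}$, not merely on the arc $\mathcal C=M^n\cap M_0$. With the natural choices of $\Psi$ (say $\Psi=y-\wp(x)$, or $\Psi=x-\phi(y,z)$ with $\phi$ an incomplete elliptic integral), the row $\nabla\Psi$ of the determinant contributes a factor $\wp'(x)$, respectively $1/\sqrt{4y^3-g_2y-g_3}$; the determinant collapses to something like $2z(Q_x+zQ_y)$ only after substituting $z=\wp'(x)$, a relation that holds on $\mathcal C$ but not on the rest of $\{\Psi=0\}$. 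Off $\mathcal C$ the Jacobian stays transcendental (at best two-valued algebraic, with a sign depending on the branch of $\wp'$), so no polynomial $\hat J$ satisfies the hypothesis and B\'ezout has nothing to act on. Swapping roles ($G=G_0$, $F=(Q,\Psi)$) does not help, since then the output map $(F,\hat J)$ still contains $\Psi$. The underlying obstruction is that the graph of $(\wp,\wp')$ over the $x$-line is an integral curve of a polynomial vector field (a Noetherian datum) but is not a separating solution of a Pfaffian equation with coefficients algebraic in the ambient variables; this is precisely why one cannot feed $\wp$ itself directly into Khovanskii's machinery.

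The paper removes the obstruction by inverting $\wp$ rather than parametrising by $x$. Split $(0,1)$ at the unique $x_0$ with $\wp'(x_0)=0$; on each half $\wp$ is monotone, so the zeros of $Q(x,\wp(x))$ correspond to zeros of $Q(\wp^{-1}(x),x)$ on $(\wp(x_0),\infty)$. The transcendental object is now the plane curve $u=\wp^{-1}(x)$, whose defining equation $du/dx=1/\sqrt{4x^3-g_2x-g_3}$ has coefficient algebraic in the ambient variable $x$ alone; the Jacobian of $(Q,\,u-\wp^{-1}(x))$ is $Q_X/\sqrt{4x^3-g_2x-g_3}+Q_Y$ identically, and the radical is cleared \emph{after} applying Theorem \ref{preimages}, by noting that any preimage of $(a,b)$ satisfies the polynomial system $Q-a=0$, $Q_X^2-(Q_Y-b)^2(4x^3-g_2x-g_3)=0$, giving $2d^2+3d$ per monotone half by B\'ezout and hence $4d^2+6d+1$ on the segment. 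The inversion and the splitting at the critical point are the missing ideas; without them, the claimed cancellation of the transcendental terms in your Jacobian does not occur.
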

 \begin{proof}
 
 By periodicity, $\wp(z)=\wp(z-\beta)$, so we may take a transformation of $z$ to consider the lines $(0,1)$, and $(0,i\tau)$. As $\wp$ has lattice $\langle1,i\tau\rangle$, and $\tau$ is real, it is real on these lines, and $\text{Im}(P(z,\wp(z))) = Q(x,\wp(\gamma x))$, $\gamma\in\{1,i\}$ for some real polynomial $Q(X,Y)$ of degree at most $d$. The argument proceeds identically for either line, so we consider $(0,1)$.
 
  Let $x_0\in(0,1)$ be such that $\wp'(x_0)=0$. Then $\wp(x)$ is $1$--$1$ on the interval $(0,x_0)$, so the number of zeros of $Q(x,\wp(x))$ is equal to that of $Q(\wp^{-1}(x),x)$ on the interval $(\wp(x_0),\infty)$. Considering the system
  \begin{align*}
   Q(u,x)&=0\\
   u-\wp^{-1}(x)&=0
  \end{align*}
  we have, letting $J$ be the Jacobian of the system $(Q,u-\wp^{-1}(x))$, by Theorem \ref{preimages}, that the number of nondegenerate solutions the system is bounded by an upper bound of the number of nondegenerate preimages of any point in the range of the system
  \begin{align*}
   Q(u,x)\\
   J(u,x).
  \end{align*}
 Letting $Q_X(X,Y) = \frac{\partial}{\partial X}Q(X,Y)$, and similarly for $Q_Y(X,Y)$, $J(x,u)$ is given by
  
  \begin{equation*}
   \frac{Q_X(u,x)}{\sqrt{4x^3-g_2x-g_3}}+Q_Y(u,x).
  \end{equation*}
  Taking some point $(a,b)$ in the range of the system $(Q,J)$, we bound the number of nondegenerate preimages. If $J(x,u)=b$, then
  \begin{equation*}
   J(u,x)=\frac{Q_X(u,x)}{\sqrt{4x^3-g_2x-g_3}}+Q_Y(u,x)=b,
  \end{equation*}
  and this holds iff
  \begin{equation*}
   Q_X(u,x)+(Q_Y(u,x)-b)\sqrt{4x^3-g_2x-g_3}=0,
  \end{equation*}
    and if this holds, then
    \begin{align*}
      (Q_X(u,x)&+(Q_Y(u,x)-b)\sqrt{4x^3-g_2x-g_3})\\
      &\cdot(Q_X(u,x)-(Q_Y(u,x)-b)\sqrt{4x^3-g_2x-g_3})\\
      &=Q_X(u,x)^2-(Q_Y(u,x)-b)^2(4x^3-g_2x-g_3)=0
    \end{align*}
    holds, so that the number of preimages of $(a,b)$ is bounded by the number of nondegenerate solutions of 
    \begin{align*}
     Q(u,x)-a&=0\\
     Q_X(u,x)^2-(Q_Y(u,x)-b)^2(4x^3-g_2x-g_3)&=0,
    \end{align*}
    which by B\'{e}zout's theorem is bounded by $2d^2+3d$.
    The interval $(x_0,1)$ is similar.

 \end{proof}

\begin{figure}
\centering
   \begin{tikzpicture}[scale=3]
      \draw[thick] (-0.2,0) -- (1.2,0);
      \draw[thick] (0,-0.2) -- (0,1.2);
      
      \draw[dashed] (-0.2,1) -- (1.2,1);
      \draw[dashed] (1,-0.2) -- (1,1.2);
      
      \draw[ultra thick][->] (0,0.2)--(0,0.5);
      \draw[ultra thick] (0,0.5)--(0,0.8);
      \draw[ultra thick] (0.2,0)--(0.8,0);
      \draw[ultra thick] (0.2,1)--(0.8,1);
      \draw[ultra thick] (1,0.2)--(1,0.8);
      \draw[ultra thick] (0,0.2) arc (90:0:0.2);
      \draw[ultra thick] (0,0.8) arc (-90:0:0.2);
      \draw[ultra thick] (0.8,1) arc (-180:-90:0.2);
      \draw[ultra thick] (1,0.2) arc (90:180:0.2);
    
   \end{tikzpicture}
   
   \caption{}
   \label{contour_2}
   
\end{figure}

 \begin{proof}[Proof of Theorem 2]
  Let $P(z):=P(z,\wp(z))$. We first take a box $B$ within the fundamental domain such that all zeros of $P$ in the interior of the fundamental domain lie within $B$. Next, define $\epsilon, \theta$ such that $0<\epsilon<\min_{\partial B}\lvert P\rvert$, and $P_\epsilon(z):=P(z)+\epsilon e^{i\theta}\neq0$ for $z \in \partial \mathcal{F}$. By Rouch\'{e}'s theorem, $P_\epsilon(z)$ has the same number of zeros in $B$ counting multiplicity as $P(z)$.
  
  We now bound the number of zeros of $P_\epsilon$ within the contour $\Gamma$ given by the truncations of the lines on the boundary of $\mathcal{F}$ united with interior quarter--circles about the poles of $\wp(z)$, as indicated in Figure \ref{contour_2} -- the common radii of these circles is taken so that there are no zeros of $P_\epsilon$ in a disc of this radius about the poles, and such that they do not intersect $B$. The radii will be taken sufficiently small subject to this. As $B$ lies in the interior of $\Gamma$, a bound upon the number of zeros of $P_\epsilon$ within $\Gamma$ bounds the number within $B$, and so bounds that of $P(z,\wp(z))$ within the fundamental domain.
  
  By the argument principle, the number of zeros of $P_\epsilon$ within $\Gamma$ is equal to 
  \begin{equation*}
   \frac{1}{2\pi i}\int_\Gamma\frac{P'_\epsilon(z)}{P_\epsilon(z)}dz,
  \end{equation*}
  which we now estimate.
  As $\wp(z)$ is real on the lines of $\Gamma$, considering for example the line $z=ix$, for $x$ real,
  \begin{equation*}
   \text{Re}(P_\epsilon(z))=\text{Re}(P(ix,\wp(ix)))+\epsilon\cos(\theta)=Q(x,\wp(ix)).
  \end{equation*}
  The number of zeros of which is, by Proposition [\ref{realbound}], bounded by $4d^2+6d+1$, and holds for any radius of quarter-circles. The other lines are similar, and so our bound for the integral over all of the lines is $8d^2+12d+6$ by Lemma \ref{integralzeros}.
  
  On the quarter--circles, we may expand $P_\epsilon$ into its Laurent series -- given a sufficiently small radius $\delta$, the term of smallest power will dominate. We let $\gamma$ be the path $p+\delta e^{i\xi}$, where $\xi$ ranges over the appropriate interval of length $\pi/2$ in $[0,2\pi]$, for the particular pole $p$, to be the interior quarter--circle. This path has length $\delta\pi/4$. Writing
  \begin{equation*}
   P_\epsilon(z) = a_k(z-p)^k +\sum_{n=k+1}^\infty a_n(z-p)^n,
   \end{equation*}
   we have, as the radius $\delta \to 0$,
    \begin{align*}
   \frac{P_\epsilon(z)-a_k(z-p)^k}{a_k(z-p)^k}&\to 0,\\
   \frac{\frac{d}{dz}(P_\epsilon(z)-a_k(z-p)^k)}{a_k(z-p)^k}&\to \frac{(k+1)a_{k+1}}{a_k}.
  \end{align*}
  So for sufficiently small $\delta$, we have, letting $f(z)=a_k(z-p)^k$, and $g(z)=P_\epsilon(z)-f(z)$, by Lemma \ref{integralremainder}, with $C=2$,
  \begin{align*}
   \frac{1}{2\pi}\left\lvert\int_\gamma \frac{P'_\epsilon(z)}{P_\epsilon(z)}dz\right\rvert&\leq \frac{1}{2\pi}\left\lvert\int_\gamma \frac{k}{z-p}dz\right\rvert+\frac{\delta}{2} \left(\frac{(k+1)a_{k+1}}{a_k}+0.1\right)\\
   &\leq \lvert k\rvert/4+0.1\\
   &\leq d/2+0.1,
  \end{align*}
  where the last inequality follows from the fact that  $\wp$ has poles of order $2$, so $-2d\leq k\leq d$. As there are $4$ quarter--circles about the poles of $\wp$, and $4$ line segments, the absolute value of the whole integral is bounded by $8d^2+14d+7$.
  \end{proof}

\end{document}